\numberwithin{equation}{section}
\newcommand{\beq}{\begin{equation}}
\newcommand{\eeq}{\end{equation}}
\newcommand{\beqs}{\begin{eqnarray*}}
\newcommand{\eeqs}{\end{eqnarray*}}
\newcommand{\beqn}{\begin{eqnarray}}
\newcommand{\eeqn}{\end{eqnarray}}
\newcommand{\beqa}{\begin{array}}
\newcommand{\eeqa}{\end{array}}
\def\lra{\longrightarrow}
\def\bc{\begin{center}}
\def\ec{\end{center}}
\def\begeq{\begin{equation}}
\def\endeq{\end{equation}}
\def\and{\quad{\rm and}\quad}
\let\lra=\longrightarrow
\def\mapright\#1{\,\smash{\mathop{\lra}\limits^{\#1}}\,}
\newtheorem{prop}{Proposition}[section]
\newtheorem{theo}[prop]{Theorem}
\newtheorem{lem}[prop]{Lemma}
\newtheorem{defi}[prop]{Definition}
\newtheorem{conj}[prop]{Conjecture}
\title  {A note on  the $K-$stability on toric manifolds}
\author {Bin $\text{Zhou}^*$}
\address{ * Department of Mathematics, Peking University,
Beijing, 100871, China}
\author { Xiaohua $\text{Zhu}^{*,**}$}
\thanks { **  Partially supported by  NSF10425102 in China.}
 \subjclass {Primary: 53C25;
Secondary: 32J15, 53C55,
 58E11}
 \email{xhzhu@math.pku.edu.cn}
\begin{document}
\bibliographystyle{plain}

\begin{abstract}In this note, we prove that on polarized toric manifolds
the relative $K$-stability with respect to Donaldson's toric
degenerations is a necessary condition for the existence of Calabi's
extremal metrics, and also we show that the modified $K$-energy is
proper in the space of $G_0$-invariant K\"ahler metrics  for the
case of toric surfaces which admit the extremal metrics.
\end{abstract}

\maketitle

\setcounter{section}{-1}

\section { Introduction }

Around the existence of Calabi's extremal metrics, there is a
well-known conjecture (cf. [Ya], [Ti]):

\begin{conj} A polarization  K\"ahler manifold
 $M$ admits a K\"ahler metric with constant scalar curvature (or more generally an extremal
metric) if and only if $M$ is stable in a certain sense of GIT.
\end{conj}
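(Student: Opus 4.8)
The statement is the Yau--Tian--Donaldson conjecture, and a genuine attempt must treat its two implications by entirely different machinery; the plan is to route everything through the Mabuchi K-energy $\nu$ on the space $\mathcal{H}$ of K\"ahler potentials in the polarization class, whose critical points are exactly the constant-scalar-curvature K\"ahler (cscK) metrics (for the extremal case one replaces $\nu$ by its modification along the extremal holomorphic vector field). The two pillars are: (i) $\nu$ decreases to a finite slope along the geodesic rays attached to test configurations, and that slope is a positive multiple of the Donaldson--Futaki invariant; and (ii) coercivity/properness of $\nu$ modulo $\mathrm{Aut}(M,L)$ is equivalent to the solvability of the cscK equation.

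For the necessity direction (existence $\Rightarrow$ stability) I would first show that a cscK metric makes $\nu$ bounded below, and in fact coercive transverse to the $\mathrm{Aut}_0$-orbit, using Bando--Mabuchi / Chen-type convexity of $\nu$ along geodesics. Given any test configuration $(\mathcal{X},\mathcal{L})$, form the associated Bergman (or weak geodesic) ray $\varphi_t$ emanating from a fixed metric; the key computation is the asymptotic expansion
\[\lim_{t\to\infty}\frac{d}{dt}\,\nu(\varphi_t)=c\cdot \mathrm{DF}(\mathcal{X},\mathcal{L}),\qquad c>0,\]
which I would obtain by combining the slope formula of Phong--Ross--Sturm / Paul--Tian with the expansion of the total weight of the $\mathbb{C}^\ast$-action on $H^0(\mathcal{X}_0,\mathcal{L}_0^{\otimes k})$. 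Boundedness of $\nu$ then forces $\mathrm{DF}\geq 0$, and the strict (poly)stability refinement follows by analyzing the product test configurations, where the Futaki invariant of the induced holomorphic vector field is the obstruction; these are exactly the data absorbed by quotienting by $\mathrm{Aut}_0$.

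The sufficiency direction (stability $\Rightarrow$ existence) is the substance and the main obstacle. I would run a continuity path for the fourth-order cscK equation
\[S(\omega_\varphi)-\bar S = 0,\]
deforming a background equation to this one, proving openness by invertibility of the linearization (the Lichnerowicz operator, modulo holomorphic vector fields) and closedness by a priori estimates: the Chen--Cheng $C^0$ estimate, then the $C^2$ and higher-order bounds. The decisive point is to show that the only way the path can fail to close up is through the formation of a destabilizing degeneration — extract a Gromov--Hausdorff limit of the metrics, use a partial $C^0$ (Bergman kernel) estimate to algebraize the limit into a test configuration, and verify that its Donaldson--Futaki invariant is $\leq 0$, contradicting K-stability. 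Equivalently, one aims to prove that uniform K-stability, over all test configurations and filtrations, implies properness of $\nu$, from which existence follows variationally.

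I expect this last conversion — from analytic non-compactness to an \emph{algebraic} destabilizing test configuration with controlled Donaldson--Futaki sign — to be the crux, and in full generality it is open: it requires a partial $C^0$ estimate valid without the Fano / K\"ahler--Einstein structure that makes the Chen--Donaldson--Sun argument work, together with the still-conjectural equivalence between uniform K-stability and properness of the K-energy. The realistic outcome of this plan is therefore a complete proof of the necessity direction, and of sufficiency only in restricted settings (Fano K\"ahler--Einstein, or the toric case treated in this note), with the general sufficiency reducing to these two hard inputs.
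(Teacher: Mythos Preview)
The statement you attempt is Conjecture~0.1, and the paper does \emph{not} prove it; it is quoted as motivation. There is therefore no ``paper's own proof'' to compare against. What the paper actually establishes is much narrower: Theorem~1.3 (necessity of relative $K$-stability for toric degenerations on polarized toric manifolds admitting an extremal metric) and Theorem~1.5 (properness of the modified $K$-energy on toric surfaces under a sign hypothesis). Both are proved by direct computation on the polytope~$P$, via Abreu's equation and the linear functional $L(u)=\int_{\partial P}u\,d\sigma-\int_P(\bar R+\theta_X)u\,dx$, not by the geodesic-ray/Donaldson--Futaki slope machinery you outline.

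Your sketch is a fair summary of the general YTD program, and you correctly flag that the sufficiency direction is the hard part and open in the generality of the conjecture. But as a proof proposal for what this paper contains, it is misdirected: you are outlining an attack on a conjecture the authors only state, while the paper's actual arguments (Proposition~2.2 via integration by parts on the polytope, and the Donaldson-style contradiction argument in Section~3) live entirely in the toric/convex-analytic world and make no use of Bergman rays, Phong--Ross--Sturm asymptotics, Chen--Cheng estimates, or Gromov--Hausdorff limits. If the intent was to prove the paper's results, you should instead work with the Legendre-transform picture, the functional $L$, and Proposition~5.3.1 of Donaldson [D2].
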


For  ``only if'' part of this conjecture, the first breakthrough was
made by Tian [Ti]. By introducing the concept of $K$-stability, he
gave an answer to ``only if'' part for K\"ahler-Einstein manifolds.
A remarkable progress was made by Donaldson [D1] who showed the
Chow-Mumford stability for a polarized K\"ahler manifold with
constant scalar curvature when the holomorphic automorphisms group
$\text{Aut}(M)$ of $M$ is finite. The Donaldson's result was lately
generalized by Mabuchi [M1], [M2] to any polarized K\"ahler manifold
$M$ which admits an extremal metric without any assumption for
$\text{Aut}(M)$. However it is still unknown whether there is a
generalization of Tian's result for the $K$-stability on the
K\"ahler-Einstein manifolds analogous to Donaldson-Mabuchi's result
for the Chow-Mumford stability.

As we know the definition of $K$-stability on a polarization
K\"ahler manifold is associated  to degenerations (or  test
configurations in the sense of Donaldson [D2]) on the underlying
manifold. In order to study the relation between the $K$-stability
and $K$-energy on a polarized  toric manifold, Donaldson  in [D2]
introduced  a class of special degenerations induced by  rational,
piecewise linear functions, called toric degenerations, and proved
that for the surfaces' case the $K$-energy is bounded from below
in the space of
 $G_0$-invariant K\"ahler metrics under the assumption of
$K$-stability for any toric degeneration, where $G_0$ is a maximal
compact torus group. In this note, we focus on polarized toric
manifolds as in [D2] and shall give an answer to ``only if'' part in
Conjecture 0.1 in the sense of relative $K$-stability with respect
to toric degenerations for the extremal metrics. Furthermore, we
show that the modified $K$-energy is proper in the space of
$G_0$-invariant K\"ahler metrics on a toric surface which admits an
extremal metric. The relative $K$-stability is a generalization of
$K$-stability which was first introduced by Sz\'{e}kelyhidi [Sz], as
well as the modified $K$-energy is a generalization of the
$K$-energy.

\section {Statement of main theorems}

  Let $(M,L)$ be an n-dimensional polarized K\"ahler manifold.
 Recall the definition of relative $K$-stability.

\begin {defi}[ {[D2], [Sz]} ] A test configuration for  the  polarized K\"ahler manifold $(M,L)$
of exponent $r$ consists of a $\Bbb C^*$-equivariant flat family of
schemes $\pi: \mathcal {W}\longrightarrow \Bbb C$ (where $\Bbb C^*$
acts on $\Bbb C$ by multiplication) and a $\Bbb C^*$-equivariant
ample line bundle $\mathcal {L}$ on $\mathcal {W}$. We require that
the fibres $(\mathcal {W}_t, \mathcal {L}|_{\mathcal {W}_t})$ are
isomorphic to $(M,L^r)$ for any  $t\neq 0$. A test configuration is
called trivial if $\mathcal {W}=M \times \Bbb C$ is a product.

Let  $(M, L)$ be  equipped with  $\Bbb C^*-$actions  $\beta$.  We
say that a test configuration $(\mathcal {W}, \mathcal {L})$ is
compatible with $\beta$, if there are $\Bbb C^*-$actions
$\tilde{\beta}$ on $(\mathcal {W}, \mathcal {L})$ such that
$\pi:\mathcal {W}\longrightarrow \Bbb C$ is
$\tilde{\beta}$-equivariant with trivial $\Bbb C^*-$actions on $\Bbb
C$ and the restriction of $\tilde{\beta}$ to $(\mathcal {W}_t,
\mathcal {L}|_{\mathcal {W}_t})$ for nonzero $t$ coincides with that
of $\beta$ on $(M, L^r)$ under the isomorphism.
\end {defi}

Note that  $\Bbb C^*$-actions on $\mathcal {W}$  induce   $\Bbb
C^*$-actions on the central fibre $M_0=\pi^{-1}(0)$ and the
restricted line bundle $\mathcal {L}|_{M_0}$. We denote by
$\tilde{\alpha}$ and $\tilde{\beta}$ the induced $\Bbb C^*-$actions
of $\alpha$ and $\beta$ on $(M_0,\mathcal {L}|_{M_0})$,
respectively. The relative $K$-stability is based on the modified
Futaki invariant on the central fibre,
 \beqn
F_{\tilde{\beta}}(\tilde{\alpha})= F(\tilde{\alpha})-
\frac{(\tilde{\alpha}, \tilde{\beta})}{(\tilde{\beta},
\tilde{\beta})}F(\tilde{\beta}),\eeqn
  where
$F(\tilde{\alpha})$ and $F(\tilde{\beta})$ are generalized Futaki
invariants of $\tilde{\alpha}$ and $\tilde{\beta}$ defined in [D2],
respectively, and $(\tilde{\alpha}, \tilde{\beta})$ and
$(\tilde{\beta}, \tilde{\beta})$ are inner products defined in [Sz].

\begin {defi} [{[Sz]}]   The  polarized K\"ahler manifold  $(M,L)$ with $\Bbb
C^*$-actions $\beta$ is $K$-semistable relative to $\beta$ if
$F_{\tilde{\beta}}(\cdot)\leq 0$ for any test-configuration
compatible with $\beta$. It is called relative $K$-stable in
addition that
 the equality holds if and only if the test-configuration is trivial.
\end{defi}

An n-dimensional polarized toric manifold $M$ corresponds to an
integral   polytope $P$ in $\mathbb {R}^n$ which is described by a
common set of some half-spaces,
 \beq\langle l_i, x\rangle < \lambda_i, ~i=1,...,d. \eeq
Here $l_i$ are $d-$vectors in $\mathbb {R}^n$ with all components in
$\mathbb {Z}$, which satisfy the Delzant condition ([Ab]). Without
loss of generality, we may assume that the original point $0$ lies
in $P$, so all $\lambda_i>0$.  Recall that a piecewise linear (PL)
function  $f$ on $P$ is of the form
$$u= \text{max}\{f^1,...,f^r\},$$
 where $f^\lambda=\sum a_i^\lambda x_i + c^\lambda, ~\lambda=1,...,r,$
for some vectors $(a^\lambda_i)\in \Bbb R^n$ and some numbers
$c^\lambda\in \Bbb R$. $f$ is called a rational PL-function if
components $a^\lambda_i$ and numbers $c^\lambda$ are all rational.
According to [D2],  a rational PL-function induces a test
configuration, called a toric degeneration.  Moreover, one can show
that a toric degeneration is always compatible to $\Bbb C^*-$actions
induced by  an extremal vector field on $M$  (cf. [ZZ1]).

Now we  can state our  first main theorem in this note.

\begin {theo} Let $M$ be a polarized toric manifold which admits an
extremal metric in this polarized K\"ahler class. Then $M$ is
$K$-stable relative to  $\Bbb C^*$-actions induced by an extremal
vector field on $M$ for any toric degeneration.
\end{theo}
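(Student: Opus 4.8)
The plan is to push everything down to the moment polytope $P$ and reduce the relative $K$-stability assertion to a convexity inequality for a linear functional on convex piecewise-linear functions, following Donaldson's strategy in [D2], and then to feed in the fact that the symplectic potential of an extremal metric solves Abreu's equation. First I would recall, from [D2] together with the authors' computation [ZZ1] and Sz\'ekelyhidi's description of the inner products [Sz], that if $f=\max\{f^1,\dots,f^r\}$ is a convex rational PL-function on $P$ and $\tilde{\alpha}_f$ denotes the $\mathbb{C}^*$-action generating the associated toric degeneration, then the modified Futaki invariant of the central fibre is
$$F_{\tilde{\beta}}(\tilde{\alpha}_f)=-c\,\mathcal{L}_\theta(f),\qquad \mathcal{L}_\theta(f):=\int_{\partial P}f\,d\sigma-\int_P\theta\,f\,dx,$$
where $c>0$ is a fixed normalization constant, $d\sigma$ is Donaldson's boundary measure on $\partial P$, and $\theta$ is the affine function associated with the extremal vector field, i.e.\ the unique affine function with $\int_P\theta\,g\,dx=\int_{\partial P}g\,d\sigma$ for every affine $g$. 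Since every toric degeneration is compatible with the extremal $\mathbb{C}^*$-action (noted in the excerpt, cf.\ [ZZ1]), relative $K$-stability with respect to toric degenerations becomes the statement that $\mathcal{L}_\theta(f)\ge 0$ for every convex rational PL-function $f$, with equality precisely for affine $f$; note $\mathcal{L}_\theta(g)=0$ for affine $g$ by the very definition of $\theta$, and a globally affine $f$ produces $M\times\mathbb{C}$, i.e.\ the trivial test configuration.

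Next I would invoke the Guillemin--Abreu correspondence: the hypothesis that $(M,L)$ carries an extremal metric in the polarization class is equivalent to the existence of a symplectic potential $u$ on $P$ — convex, smooth on $\mathrm{Int}\,P$, with the canonical logarithmic singularity along each facet — solving $-\sum_{i,j}\partial_i\partial_j u^{ij}=\theta$ on $\mathrm{Int}\,P$, where $(u^{ij})$ is the inverse of the Hessian $(u_{ij})$ and $\theta$ is exactly the extremal affine function from the previous step. Fixing such a $u$, I would then apply Donaldson's fundamental integration-by-parts identity: for any continuous convex $f$ on $\overline{P}$,
$$\mathcal{L}_\theta(f)=\int_{\partial P}f\,d\sigma-\int_P\theta\,f\,dx=\int_P\sum_{i,j}u^{ij}f_{ij}\,dx,$$
the point being that the prescribed boundary behaviour of $u$ is precisely calibrated so that the double integration by parts reproduces the boundary term $\int_{\partial P}f\,d\sigma$ with no residual contribution. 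For $f$ merely PL the Hessian $f_{ij}$ is read as a nonnegative symmetric-matrix-valued measure carried by the creases of $f$, and the identity survives by approximating $f$ by smooth convex functions (or by a direct distributional computation).

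Since $(u^{ij})$ is positive definite at every interior point and $(f_{ij})$ is a nonnegative matrix-valued measure, the right-hand side is $\ge 0$; hence $F_{\tilde{\beta}}(\tilde{\alpha}_f)=-c\,\mathcal{L}_\theta(f)\le 0$ for every toric degeneration, so $M$ is $K$-semistable relative to the extremal action. If equality holds, then $\int_P\sum_{i,j}u^{ij}f_{ij}\,dx=0$, and positive-definiteness of $(u^{ij})$ forces the measure $(f_{ij})$ to vanish, i.e.\ $f$ is affine, i.e.\ the associated toric degeneration is the product $M\times\mathbb{C}$; conversely a trivial toric degeneration comes from an affine $f$, for which $\mathcal{L}_\theta(f)=0$. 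This gives exactly the equality-iff-trivial clause, and hence relative $K$-stability with respect to toric degenerations.

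The genuinely delicate point is the integration-by-parts identity of the second paragraph: the symplectic potential of an extremal metric has Hessian inverse $u^{ij}$ degenerating on $\partial P$ while $u$ itself blows up logarithmically, so all the integrals involved are improper and the boundary contributions must be controlled with care, and one must in addition pass from smooth test functions to convex PL $f$. This is precisely the analysis Donaldson carried out in the constant-scalar-curvature case (a constant in place of $\theta$), and the proof consists in verifying that it goes through verbatim with the affine function $\theta$, together with the bookkeeping in the first paragraph that matches Sz\'ekelyhidi's inner products on the central fibre with the $L^2(P)$ inner product on affine functions.
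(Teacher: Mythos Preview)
Your proposal is correct and follows essentially the same route as the paper: reduce relative $K$-stability to $L(f)\ge 0$ via Lemma~2.1, plug in Abreu's equation for the extremal symplectic potential $u$, and integrate by parts to express $L(f)$ as a pairing of the positive-definite $(u^{ij})$ with the distributional Hessian of the PL function $f$. The only cosmetic difference is that the paper carries out your ``direct distributional computation'' by hand---partitioning $P$ along the creases, integrating by parts on each piece, and assembling the crease contributions into the explicit formula $L(f)=\sum_{\lambda<\mu}\int_{\partial P^\lambda\cap\partial P^\mu} u^{ij}(c^{\alpha_\lambda}_i-c^{\alpha_\mu}_i)(c^{\alpha_\lambda}_j-c^{\alpha_\mu}_j)\big/\sqrt{\sum_l(c^{\alpha_\lambda}_l-c^{\alpha_\mu}_l)^2}\,d\sigma_0$---which is exactly $\int_P u^{ij}f_{ij}$ with $f_{ij}$ read as the crease-supported measure you describe.
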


 We will give a proof of Theorem 1.3 in the next section.
  The proof   can be also used to   discuss the properness of
the modified $K$-energy $\mu(\phi)$.  As same as the $K$-energy,
$\mu(\phi)$ is defined on a K\"ahler class whose critical point is
an extremal metric while the critical point of $K$-energy is a
K\"ahler metric with constant scalar curvature.

Let  $\omega_g$ be a K\"ahler form of K\"ahler metric $g$ on a
compact K\"ahler manifold $M$ and
$\omega_{\phi}=\omega_g+\sqrt{-1}\partial\overline\partial\phi$ be a
K\"ahler form associated to a potential function $\phi$ in K\"ahler
class $[\omega_g]$.  Let $\theta_X$ be a normalized potential
function associated to an extremal vector field $X$ and the metric
$\omega_g$ on $M$ [ZZ1].
 Then the $\mu(\phi)$ is given by
    $$\mu (\phi) = - \frac{1}{V}\int_{0}^{1}\int_{M}
\dot\phi_t[R(\omega_{\phi_t}) -\overline R-
\theta_X(\omega_{\phi_t})] \omega_{\phi_t}^n\wedge dt,$$
  where $\phi_t ~(0 \leq t \leq 1)$
is a family of potential functions connecting $0$ to $\phi$,
$R(\omega_{\phi_t})$ denote the scalar curvatures of
$\omega_{\phi_t}$,
 $\theta_X(\omega_{\phi_t})=\theta_X+X(\phi_t)$ are normalized potential
functions associated to  $X$ and  $\omega_{\phi_t}$, and $\overline
R$ is the average of scalar curvature of $\omega_g$. It can be
showed that the functional $\mu(\phi)$ is well-defined, i.e., it is
independent of the choice of path $\phi_t$ ([Gu]). Thus one sees
that  $\phi$ is a critical point of $\mu(\cdot)$ iff the
corresponding metric $\omega_\phi$ is extremal.

\begin {defi}   Let
$$I(\phi)=\frac{1}{V}\int_M \phi(\omega_g^n-\omega_{\phi}^n),$$
where $V=\int_M\omega_g^n$.   $\mu(\phi)$ is called proper
associated to a subgroup $G$ of the automorphisms group
$\text{Aut}(M)$ in K\"ahler class $[\omega_g]$  if there is a
continuous function $p(t)$ in $\Bbb R$ with the property
 $$ \lim_{t\to+\infty} p(t)=+\infty,$$
  such that
  $$\mu(\phi)\ge \inf_{\sigma\in G} p(I(\phi_{\sigma})),$$
   where
 $\phi_{\sigma}$ is defined by
$$\omega_g+\sqrt{-1}\partial\bar{\partial}\phi_{\sigma}
=\sigma^*(\omega_g+\sqrt{-1}\partial\bar{\partial}\phi).$$
\end{defi}

The above definition was first introduced by Tian for the
$K$-energy ([Ti]]. He proved that the properness of $K$-energy is
a sufficient and necessary condition for the existence of
K\"ahler-Einstein metrics. In a very recent paper [ZZ2], the
authors proved the existence of minimizing weak solution of
extremal metrics on toric manifolds under the assumption of
properness of $\mu(\phi)$. This weak solution will be an extremal
metric if one can further prove some regularities of the solution.

The following is our second main theorem in this note.

\begin {theo} Let $M$ be a toric surface which admits an
extremal metric $\omega_E$. Suppose that
 \beqn\bar{R}+\theta_X(\omega_E)>0.\eeqn
  Then $\mu(\phi)$ is proper
associated to  $T$ in the space of $G_0$-invariant K\"ahler metrics.
Here $T$ is a torus actions group of $M$ and $G_0$ is a maximal
compact subgroup of $T$.
\end{theo}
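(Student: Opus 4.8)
The plan is to run the same degeneration-to-boundary-behavior machinery used in the proof of Theorem 1.3, but now translated into an analytic statement about the modified $K$-energy on the polytope. Following Donaldson's reformulation, a $G_0$-invariant Kähler metric in $[\omega_g]$ corresponds to a convex function $u$ on the polytope $P\subset\mathbb R^2$, normalized (say) by $u\ge 0$ and $u(p_0)=0$ at some fixed interior point, and the modified $K$-energy takes the form
\[
\mu(u)=-\int_P \log\det(u_{ij})\,dx+\mathcal L_{X}(u),
\]
where $\mathcal L_{X}(u)=\int_{\partial P}u\,d\sigma-\int_P\big(\bar R+\theta_X(\omega_E)\big)\,u\,dx$ is the linear functional built from the boundary measure $d\sigma$ and the weight $a(x)=\bar R+\theta_X(\omega_E)$ appearing in the extremal equation. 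First I would record that the existence of the extremal metric $\omega_E$ forces $\mathcal L_X$ to vanish on all affine functions — this is exactly the statement that $X$ is the extremal vector field and that $\bar R+\theta_X(\omega_E)$ is the correct normalized weight — so $\mathcal L_X$ descends to a functional on convex functions modulo affine functions, which is the correct space on which to seek properness (properness of $\mu$ relative to $T$ is precisely coercivity of $\mu$ modulo the affine/torus action).

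Next I would establish the key positivity input: relative $K$-stability with respect to \emph{all} toric degenerations gives $\mathcal L_X(u)>0$ for every nonzero convex function $u$ that is not affine, with a linear lower bound $\mathcal L_X(u)\ge\lambda\,\|u\|_{L^1(\partial P)}$ (or $\ge\lambda\int_{\partial P}u$) after normalization. Here Theorem 1.3 supplies relative $K$-stability for free since $\omega_E$ exists; the hypothesis $\bar R+\theta_X(\omega_E)>0$ is what upgrades strict positivity of $\mathcal L_X$ into a \emph{uniform} estimate, because it allows the interior term $-\int_P a\,u\,dx$ to be controlled (it is $\le 0$ up to the normalization, so $\mathcal L_X(u)$ is bounded below by a multiple of the boundary integral $\int_{\partial P}u\,d\sigma$). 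This is the two-dimensional analogue of Donaldson's argument in [D2, §5]; the extra weight $a(x)$ only enters through a sign, and positivity of $a$ is what keeps that sign favorable. The argument that strict positivity on the (compact, after normalization) set of normalized convex functions self-improves to a linear bound is a standard compactness/scaling argument using that $\mathcal L_X$ is linear and that the set $\{u\text{ convex},\ u\ge 0,\ u(p_0)=0,\ \int_{\partial P}u=1\}$ has good compactness properties in $L^1$.

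With the linear estimate $\mathcal L_X(u)\ge\lambda\int_{\partial P}u\,d\sigma$ in hand, the remaining step is purely analytic and is where the surface hypothesis $n=2$ is essential. One must control the entropy term $-\int_P\log\det(u_{ij})\,dx$ from below in terms of $\int_{\partial P}u\,d\sigma$ and $I(u)$. In dimension two Donaldson proved exactly the estimate needed: for normalized convex $u$ one has $-\int_P\log\det(u_{ij})\,dx\ge -C_1\int_{\partial P}u\,d\sigma - C_2$, by a slicing argument reducing to a one-dimensional convexity inequality along affine segments. Combining this with the linear lower bound for $\mathcal L_X$, for any $\epsilon<\lambda$ we get
\[
\mu(u)\ \ge\ (\lambda-\epsilon)\int_{\partial P}u\,d\sigma\ -\ C_\epsilon,
\]
and then a further comparison relating $\int_{\partial P}u\,d\sigma$ to $I(u)$ (again available in the toric/surface setting, since $I(u)$ is comparable to $\int_P u\,dx$ and hence, for normalized convex functions, to $\int_{\partial P}u\,d\sigma$ up to the torus action) yields $\mu(u)\ge p(I(u))$ with $p(t)=ct-C\to\infty$, which is precisely properness relative to $T$.

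The main obstacle is the second step: converting the qualitative consequence of relative $K$-stability (strict positivity of the modified Futaki-type functional $\mathcal L_X$ on non-affine convex functions) into the \emph{uniform} linear lower bound. This requires both the compactness argument on the space of normalized convex functions and, crucially, the hypothesis $\bar R+\theta_X(\omega_E)>0$ to dominate the interior term $\int_P a\,u\,dx$ by the boundary term; without that positivity the interior contribution could in principle overwhelm $\int_{\partial P}u\,d\sigma$ and the estimate would fail. The entropy estimate of the third step is the other delicate point, but it is exactly Donaldson's surface estimate and can be quoted; it is what confines Theorem 1.6 to toric surfaces rather than general toric manifolds.
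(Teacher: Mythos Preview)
Your overall architecture matches the paper's: reduce properness to a uniform linear bound $L(u)\ge\lambda\int_{\partial P}u\,d\sigma$ on normalized convex functions, and then pass from that bound to properness of $\mu$ (the paper packages this last implication as Proposition~3.2, quoted from [ZZ1]). But the heart of the matter --- obtaining the linear bound --- is not handled correctly in your proposal, and this is where both the hypothesis $\bar R+\theta_X>0$ and the surface assumption actually enter.

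The gap is in your second step. Theorem~1.3 (equivalently Proposition~2.2) gives $L(f)>0$ only for non-affine \emph{piecewise linear} functions $f$; it says nothing directly about general convex functions. Your compactness/scaling argument, run as you describe it, produces a normalized convex $u_\infty\in\mathcal C_1$ with $L(u_\infty)=0$ --- but this is not yet a contradiction, since $u_\infty$ need not be PL. The paper closes exactly this gap by invoking Donaldson's Proposition~5.3.1 from [D2]: given $L\ge 0$ on all of $\mathcal C_1$ and a non-affine convex $u_\infty$ with $L(u_\infty)=0$, one can extract a \emph{simple PL} function $v_0$, with crease meeting the interior of $P$, still satisfying $L(v_0)=0$; \emph{this} contradicts Proposition~2.2. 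Both the surface hypothesis $n=2$ and the positivity $\bar R+\theta_X>0$ enter precisely here, as hypotheses of Donaldson's structural result --- not, as you suggest, in the entropy estimate or via a sign argument on the interior term. Indeed your sign reasoning is backwards: $a>0$ and $u\ge 0$ make $-\int_P a\,u\,dx\le 0$, which yields an \emph{upper} bound $L(u)\le\int_{\partial P}u\,d\sigma$, not the lower bound you need. The paper's Proposition~3.2 is stated without any dimension restriction; the two-dimensionality of the theorem comes solely from the appeal to Donaldson's Proposition~5.3.1.
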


We note that in Theorem 1.5 we need not to assume that the surface
is polarized.

\section { Proof of theorem 1.3}

Let $P$ be an integral   polytope  in $\mathbb {R}^n$ associated to
a polarized toric manifold $M$  as in Section 1. We choose a
$G_0$-invariant K\"ahler metric $\omega_g$ on $M$. Then there exists
a convex function $\psi_0$ in $\Bbb R^n$ such that
$$\omega_g=\sqrt{-1}\partial\overline\partial \psi_0.$$
 Let  $u_0$ be a  Legendre function  of $\phi_0$ which is a convex function on
 $P$.
 Set
 $$\mathcal C= \{u |~u~\text{is a convex function on }~ P~\text{with}~
u-u_0\in C^\infty(\overline P)\}.$$
 Then one can show that functions in $\mathcal {C}$ are
 corresponding to $G_0$-invariant K\"ahler potential functions on $M$ by one-to-one (cf. [Ab],[D2]).

Let $d\sigma_0$ be the Lebesgue measure on the boundary $\partial
P$ and $\nu$ be the outer normal vector field on $\partial P$. Let
$d\sigma$ be an induced measure on $\partial P$ such that
$d\sigma=\lambda_i^{-1}(\nu, x)d\sigma_0$ on each face $\langle
l_i, x\rangle= \lambda_i$ of $P$.  We define a linear functional
on the space of continuous function on $\overline P$ by
$$L(u)=\int_{\partial P}u d\sigma-\int_P (\bar{R}+\theta_X)udx,$$
 where
 $\theta_X$ is the normalized potential function associated to the
extremal vector field $X$ as in Section 1, which is equal to
$<A,x>+a$ for some vector $A\in \Bbb R^n$ and constant $a$ in the
coordinates $x$ in $P$ ([ZZ1]).   The following result was proved in
[ZZ1].

 \begin{lem}[{[ZZ1]}]  Let $\tilde\alpha$ and $\tilde\beta$ are two $C^*$-actions induced by a rational
PL-function $f$ and the extremal vector field $X$.  Then
 \beqn
F_{\tilde{\beta}}(\tilde{\alpha})=-\frac{1}{2Vol(P)}L(f).\eeqn
\end{lem}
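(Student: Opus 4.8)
The plan is to assemble three ingredients, each essentially due to Donaldson [D2] or Sz\'ekelyhidi [Sz]: the evaluation of the generalized Futaki invariant of a toric degeneration, the polytopal characterization of the extremal vector field, and the toric form of the relative inner product. Throughout write $V_P=\mathrm{Vol}(P)$ and, for a continuous function $u$ on $\overline P$, put $\mathcal{L}(u)=\int_{\partial P}u\,d\sigma-\bar R\int_P u\,dx$, so that the functional of Section~2 reads $L(u)=\mathcal{L}(u)-\int_P\theta_X\,u\,dx$.

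First I would recall from [D2] (in the normalization used there and in [ZZ1]) that the generalized Futaki invariant of the $\mathbb C^*$-action $\tilde\alpha$ induced on the central fibre by the rational PL-function $f$ equals
$$F(\tilde\alpha)=-\frac{1}{2V_P}\,\mathcal{L}(f),$$
which is obtained by expanding in $k$ the dimension of, and the weight of $\tilde\alpha$ on, $H^0(M_0,\mathcal{L}^k|_{M_0})$; in the toric setting these are lattice-point sums over $kP$, broken over the regions where $f$ is linear, whose two leading terms come from the Euler--Maclaurin formula. The same computation applies to $\tilde\beta$: the extremal vector field $X$ corresponds to the affine function $\theta_X=\langle A,x\rangle+a$, which as a one-piece PL-function induces the trivial family $M\times\mathbb C$ carrying the $\mathbb C^*$-action generated by $X$, so $F(\tilde\beta)=-\frac{1}{2V_P}\,\mathcal{L}(\theta_X)$, the classical Futaki invariant of $X$.

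Next I would use Donaldson's polytopal description of the extremal vector field: $\theta_X$ is the unique affine function with $\int_P\theta_X\,h\,dx=\mathcal{L}(h)$ for every affine $h$ — equivalently $L(h)=0$ for all affine $h$ — which is well posed because the $L^2(P,dx)$-pairing is nondegenerate on the finite-dimensional space of affine functions; in particular $\int_P\theta_X\,dx=0$. Taking $h=\theta_X$ gives $\mathcal{L}(\theta_X)=\int_P\theta_X^2\,dx$, so $F(\tilde\beta)=-\frac{1}{2V_P}\int_P\theta_X^2\,dx$. Then I would translate the pairings of [Sz] into the toric picture: on the central fibre the Hamiltonians of $\tilde\alpha$ and $\tilde\beta$ (for the natural toric symplectic form) are $f$ and $\theta_X$, and unwinding the trace-normalized definition of [Sz] yields $(\tilde\alpha,\tilde\beta)=c\int_P f\,\theta_X\,dx$ and $(\tilde\beta,\tilde\beta)=c\int_P\theta_X^2\,dx$ with the same constant $c$ (the trace corrections vanish since $\int_P\theta_X\,dx=0$). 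Substituting everything,
\begin{align*}
F_{\tilde\beta}(\tilde\alpha)&=F(\tilde\alpha)-\frac{(\tilde\alpha,\tilde\beta)}{(\tilde\beta,\tilde\beta)}F(\tilde\beta)
=-\frac{1}{2V_P}\mathcal{L}(f)+\frac{1}{2V_P}\int_P f\,\theta_X\,dx\\
&=-\frac{1}{2V_P}\bigl(\mathcal{L}(f)-\int_P f\,\theta_X\,dx\bigr)=-\frac{1}{2V_P}L(f),
\end{align*}
which is the claim; as a check, both sides are unchanged under $f\mapsto f+h$ with $h$ affine, as they should be since this modifies the degeneration only by a torus action.

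The main obstacle is the first step — Donaldson's Futaki computation for the toric degeneration — where one must track the two leading coefficients of the equivariant Hilbert polynomial of the (in general reducible) central fibre and apply Euler--Maclaurin over $kP$ with care; this is what fixes both the sign and the exact constant $\tfrac{1}{2V_P}$. A smaller but genuine point is matching the abstract inner product of [Sz] with the $L^2$-pairing over $P$ and verifying that the \emph{same} constant $c$ appears in $(\tilde\alpha,\tilde\beta)$ and in $(\tilde\beta,\tilde\beta)$, so that it cancels in the ratio; once these are in hand, the rest is linear algebra on the space of affine functions.
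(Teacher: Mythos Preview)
The paper does not supply a proof of this lemma; it is simply quoted from [ZZ1]. Your outline is the natural one and is correct: decompose $L(f)=\mathcal L(f)-\int_P f\,\theta_X\,dx$, feed in Donaldson's toric formula $F(\tilde\alpha)=-\tfrac{1}{2V_P}\mathcal L(f)$, use the $L^2$-orthogonal-projection characterization of $\theta_X$ among affine functions (so that $\mathcal L(\theta_X)=\int_P\theta_X^2\,dx$ and $\int_P\theta_X\,dx=0$), and identify Sz\'ekelyhidi's pairing with the $L^2(P)$-pairing up to a common constant. The algebra you wrote then closes exactly as stated, and the sanity check on affine shifts is the right one.

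Two small remarks. First, your assertion $\int_P\theta_X\,dx=0$ follows from $\mathcal L(1)=\int_{\partial P}d\sigma-\bar R\,V_P=0$, which in turn is the toric identity $\bar R=V_P^{-1}\int_{\partial P}d\sigma$; it would be worth making that explicit, since it is exactly what kills the trace correction in $(\tilde\alpha,\tilde\beta)$ even though $\int_P f\,dx$ need not vanish. Second, you rightly flag the two genuine inputs you are importing: Donaldson's Euler--Maclaurin computation of the Futaki invariant on the (possibly reducible) central fibre, and the identification of the Sz\'ekelyhidi inner product with the $L^2(P)$-pairing. Both are standard in this literature, but neither is proved in the present paper, so your argument is a sketch in the same sense that the paper's citation of [ZZ1] is.
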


 By the above lemma, to prove   Theorem 1.3  we suffice to  prove

\begin {prop} Let $M$ be a  toric manifold which admits an
extremal metric. Then for any PL-function $f$ on $P$, we have
  \beqn
L(f)\geq 0.\eeqn
 Moreover the equality  holds if and only if $f$ is  an affine linear function on $P$.
\end{prop}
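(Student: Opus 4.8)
The plan is to reduce the inequality $L(f)\ge 0$ to the variational characterization of the extremal metric and then exploit convexity. First I would recall that, since $M$ admits an extremal metric $\omega_E$ in the given polarized class, there is a convex function $u_E\in\mathcal C$ whose Legendre dual $\psi_E$ is the symplectic potential of $\omega_E$; by the Abreu–Guillemin formalism the extremal equation for $\omega_E$ is precisely the statement that the Euler–Lagrange functional $\mathcal F(u)=\int_{\partial P}u\,d\sigma-\int_P(\bar R+\theta_X)u\,dx-\int_P\log\det(u_{ij})\,dx$ is critical at $u=u_E$. The key linear piece of $\mathcal F$ is exactly $L(u)$, and the nonlinear piece $-\int_P\log\det(u_{ij})\,dx$ is strictly convex along segments in $\mathcal C$. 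So the first step is: for any $u\in\mathcal C$ and any PL-function $f$ (which we may assume normalized so that $\min_P f=0$, since adding a constant changes nothing and $L$ annihilates affine functions by the definition of $\theta_X$ — this is where the normalization $\bar R+\theta_X$ is engineered), consider the ray $u+tf$ for $t\ge 0$. One checks $u+tf$ need not be smooth, but it can be approximated by elements of $\mathcal C$, or one works directly with the extended functional on convex functions.

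The main step is the standard Donaldson-type computation: differentiate $\mathcal F(u_E+tf)$ at $t=0^+$. Because $f$ is only piecewise linear, $\int_P\log\det((u_E)_{ij}+t f_{ij})\,dx$ is differentiable in $t$ from the right, and its derivative at $t=0$ is $\int_P \langle \text{(cofactor matrix of }(u_E)_{ij}),\, f_{ij}\rangle\,dx$, which is $\le 0$: this is Donaldson's integration-by-parts argument showing that the Hessian term contributes nonpositively for convex $f$ (the boundary terms produced by integrating by parts against the symplectic potential of the extremal metric cancel against a piece of $\int_{\partial P}f\,d\sigma$, using the boundary behaviour of $(u_E)_{ij}$ dictated by the Delzant/Guillemin boundary conditions). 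Since $u_E$ is a critical point of $\mathcal F$ on $\mathcal C$ and $f$ is a valid (one-sided) direction, the full right derivative $\tfrac{d}{dt}\big|_{0^+}\mathcal F(u_E+tf)$ equals $0$; combining, $L(f) = \tfrac{d}{dt}\big|_{0^+}\mathcal F(u_E+tf) + \big(-\tfrac{d}{dt}\big|_{0^+}\big(-\int_P\log\det\big)\big) \ge 0$. More precisely, $L(f)=\tfrac{d}{dt}|_{0^+}\mathcal F(u_E+tf)+\int_P\langle\text{cof}((u_E)_{ij}),f_{ij}\rangle\,dx$, and both terms on the right are $\ge 0$ (the first is $\ge 0$ because $u_E$ minimizes $\mathcal F$ — here one needs that $\mathcal F$ is convex and $u_E$ critical, hence a global minimum, so $\mathcal F(u_E+tf)\ge\mathcal F(u_E)$ forces the one-sided derivative to be $\ge 0$).

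For the equality case: if $L(f)=0$ then both nonnegative terms vanish; the vanishing of $\int_P\langle\text{cof}((u_E)_{ij}),f_{ij}\rangle\,dx$ together with positivity of the cofactor matrix (since $u_E$ is a genuine convex symplectic potential, $\det(u_E)_{ij}>0$ in $\mathring P$) and convexity of $f$ ($f_{ij}\ge 0$ as a matrix of measures) forces $f_{ij}\equiv 0$ on the interior, i.e. $f$ is affine on each of the open regions where one linear piece dominates; continuity and convexity across the creases then force $f$ to be globally affine. Conversely $L$ kills affine functions by construction of $\theta_X$ (the identity $\int_{\partial P}\ell\,d\sigma=\int_P(\bar R+\theta_X)\ell\,dx$ for affine $\ell$, which is exactly the defining property of the extremal vector field / normalization, cf.\ [ZZ1]).

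\medskip

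\textbf{Expected main obstacle.} The delicate point is the boundary analysis when $f$ is merely piecewise linear and $u_E$ is the symplectic potential of an extremal — rather than constant-scalar-curvature — metric: one must justify the integration by parts for $\int_P\log\det((u_E)_{ij})$ rigorously near $\partial P$ (where $(u_E)_{ij}$ blows up in the Guillemin-normal direction) and track that the boundary contributions assemble exactly into $\int_{\partial P}f\,d\sigma$ with the induced measure $d\sigma=\lambda_i^{-1}(\nu,x)\,d\sigma_0$. This is the step where the precise definitions of $d\sigma$, $\theta_X$, and the class $\mathcal C$ all have to be used in concert, and where an approximation argument (smoothing $f$, or exhausting $P$ by slightly smaller polytopes) is needed to make the formal computation legitimate.
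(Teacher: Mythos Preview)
Your strategy --- use that $u_E$ satisfies the extremal (Abreu) equation and integrate by parts to exhibit $L(f)$ as a nonnegative pairing with the Hessian of $f$ --- is precisely the paper's. The difficulty is in how you try to extract that pairing. You propose to differentiate $\mathcal F(u_E+tf)$ at $t=0^+$ and read off $L(f)$ as the sum of the one-sided derivative and $\int_P(u_E)^{ij}f_{ij}\,dx$. But for a PL function $f$ the pointwise Hessian $f_{ij}$ vanishes almost everywhere, so $\det\bigl((u_E+tf)_{ij}\bigr)=\det\bigl((u_E)_{ij}\bigr)$ a.e.\ and the term $-\int_P\log\det$ is \emph{constant} along the ray; hence $\tfrac{d}{dt}\big|_{0^+}\mathcal F(u_E+tf)=L(f)$ exactly, and your decomposition collapses to a tautology. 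The distributional Hessian of $f$ (the crease measure) is what you want, but it does not appear by differentiating $\log\det$ of the a.e.\ Hessian. For the same reason your equality argument stalls: ``$f_{ij}\equiv 0$ on the interior'' is automatic pointwise for any PL function and says nothing about whether creases are present.

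The paper avoids the variational detour by substituting Abreu's equation directly: writing $L(f)=\int_{\partial P}f\,d\sigma+\int_P(u_E)^{ij}_{ij}\,f\,dx$, it integrates by parts twice \emph{over each maximal-linearity piece} $P^\lambda$ of $f$, using the exhaustion $P_\delta\nearrow P$ to handle $\partial P$. The outer boundary terms cancel $\int_{\partial P}f\,d\sigma$ via the Guillemin/Donaldson boundary behaviour you allude to, while the interior interfaces $\partial P^\lambda\cap\partial P^\mu$ survive and give
\[
L(f)=\sum_{\lambda<\mu}\int_{\partial P^\lambda\cap\partial P^\mu}
\frac{(u_E)^{ij}\,(c^{\alpha_\lambda}_i-c^{\alpha_\mu}_i)(c^{\alpha_\lambda}_j-c^{\alpha_\mu}_j)}
{\bigl(\sum_l(c^{\alpha_\lambda}_l-c^{\alpha_\mu}_l)^2\bigr)^{1/2}}\,d\sigma_0\ \ge\ 0.
\]
This \emph{is} the distributional pairing $\int(u_E)^{ij}f_{ij}$ you were aiming for, but obtained by moving derivatives from $(u_E)^{ij}_{ij}$ onto the piecewise-linear $f$, not by differentiating the functional along a ray. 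Equality then visibly forces every $(n-1)$-dimensional crease to disappear, hence $f$ is globally affine. In short, the ``main obstacle'' you flagged --- the careful integration by parts with the $P_\delta$ exhaustion --- is the entire proof, not a technicality layered on top of a variational derivative.
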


\begin {proof} By definition, we may assume that
$$f=\max\{f^1,..., f^r\},$$
and each  $f^\alpha=\sum c_i^\alpha x_i+c^\alpha$ is an affine
linear function on $P$. Then   $P$ can be divided into $m(\geq r)$
small polytopes $P^1,... , P^m$ such that for each $P^\lambda$
there exists a $f^{\alpha_\lambda}$ such that
$f=f^{\alpha_\lambda}$ on $P^\lambda$. By the assumption of the
existence of extremal metric, we see that there exists  a
$u\in\mathcal C$ which satisfies the Abreu's equation ([Ab]),
 \beq -\sum_{i,j}u_{ij}^{ij}=\bar{R}+\theta_X,\eeq
where $(u^{ij})=(u_{ij})^{-1}$ and $u_{kl}^{ij}=\frac{\partial^2
u^{ij}}{\partial x^k\partial x^l}$. Thus
 \beq L(f)=\int_{\partial P}f
d\sigma+\int_P \sum_{i,j}u^{ij}_{ij}fdx.\eeq

For any small $\delta> 0$,  we let $P_{\delta}$ be the interior
polygon with faces parallel to those of $P$ separated by distance
$\delta$. Integrating by parts, one sees
 \beqn &&\int_{P_{\delta}}
u^{ij}_{ij}f dx\notag\\
&& =\sum_\lambda\int_{P^\lambda\bigcap P_{\delta}} \sum_{i,j}u^{ij}_{ij}f^{\alpha_\lambda} dx\notag\\
&& =\sum_\lambda\int_{\partial(P^\lambda\bigcap P_{\delta})}
\sum_{i,j}u^{ij}_if^{\alpha_\lambda} n^{\alpha_\lambda}_jd\sigma_0
-\sum_\lambda\int_{P^{\lambda}\bigcap P_{\delta}} \sum_{i,j}u^{ij}_{i}c^{\alpha_\lambda}_j dx\notag\\
&&=\sum_\lambda\int_{\partial(P^{\lambda}\bigcap P_{\delta})}
\sum_{i,j}u^{ij}_if^{\alpha_\lambda} n^{\alpha_\lambda}_jd\sigma_0
-\sum_\lambda\int_{\partial(P^{\lambda}\bigcap P_{\delta})}
\sum_{i,j}u^{ij}c^{\alpha_\lambda}_j n^{\alpha_\lambda}_i d\sigma_0\notag\\
&&=\sum_\lambda\int_{\partial P_{\delta}}
\sum_{i,j}u^{ij}_if^{\alpha_\lambda}
n^{\alpha_\lambda}_jd\sigma_0-
\sum_\lambda\int_{\partial P_{\delta}}\sum_{i,j}u^{ij}c^{\alpha_\lambda}_j n^{\alpha_\lambda}_i d\sigma_0\notag\\
&&-\sum_{\lambda<\mu}\int_{\partial P^{\lambda}\bigcap
\partial P^{\mu}\bigcap P_\delta} \sum_{i,j}(u^{ij}c^{\alpha_\lambda}_j n^{\alpha_\lambda}_i +u^{ij}c^{\alpha_\mu}_j
n^{\alpha_\mu}_i)d\sigma_0,\eeqn
 where
$(n^{\alpha_\lambda}_1,...,n^{\alpha_\lambda}_n)$ is the unit
outer normal vector on $\partial P^{\lambda}$.  By [D2]  we know
that the first term goes to $-\int_{\partial P}f d\sigma$ and the
second term goes to $0$ at the last equality as $\delta$ goes to
zero. On the other hand, if $\partial P^{\lambda}\bigcap
\partial P^{\mu}$ is a common $(n-1)$-dimensional face of $P^\lambda$ and
$P^\mu$, then
$(c^{\alpha_\lambda}_1-c^{\alpha_\mu}_1,...,c^{\alpha_\lambda}_n-c^{\alpha_\mu}_n)$
is a nonzero vector, and
$$n^{\alpha_\lambda}_i=-\frac{1}{\sqrt{\sum_l(c^{\alpha_\lambda}_l-c^{\alpha_\mu}_l)^2}} (c^{\alpha_\lambda}_i-c^{\alpha_\mu}_i),
n^{\alpha_\mu}_i=-\frac{1}{\sqrt{\sum_l(c^{\alpha_\mu}_l-c^{\alpha_\lambda}_l)^2}}(c^{\alpha_\mu}_i-c^{\alpha_\lambda}_i).$$
Substituting  them into the third term  at the last equality in
(2.5) and letting $\delta$ go to $0$, we derive
  \beqn &&\int_P \sum_{i,j}u^{ij}_{ij}f
dx\notag\\
 &&=-\int_{\partial P}f
d\sigma+\sum_{\lambda<\mu}\int_{\partial P^\lambda\bigcap
\partial P^\mu}
\frac{\sum_{i,j}u^{ij}(c^{\alpha_\lambda}_i-c^{\alpha_\mu}_i)(c^{\alpha_\lambda}_j-c^{\alpha_\mu}_j)}{\sqrt{\sum_i(c^{\alpha_\lambda}_i
-c^{\alpha_\mu}_i)^2}}d\sigma_0.\eeqn
Hence, combining (2.4) and (2.6),
$$L(f)=\sum_{\lambda<\mu}\int_{\partial
P^\lambda\bigcap\partial P^\mu}
\frac{\sum_{i,j}u^{ij}(c^{\alpha_\lambda}_i-c^{\alpha_\mu}_i)(c^{\alpha_\lambda}_j-c^{\alpha_\mu}_j)}
{\sqrt{\sum_l(c^{\alpha_\lambda}_l-c^{\alpha_\mu}_l)^2}}d\sigma_0\geq
0.$$
  Note that the equality holds if only if there is no common
$(n-1)$-dimensional face for any $P^\lambda$ and $P^\mu$, which
implies that $f$ is just an  affine linear function.
\end {proof}

\section { Proof of Theorem 1.5}

To prove Theorem 1.5, we need to use a Donaldson's version of the
modified $K$-energy ([D2], [ZZ1]).

\begin {lem} Let $\omega_{\phi}$ be a $G_0$-invariant K\"ahler
metric on $M$ and $u$ be a Legendre function of $\phi$ in $\mathcal
{C}$. Then
$$\mu(\phi)=\frac{2^nn!(2\pi)^n}{Vol(M)} \mathcal {F}(u),$$
 where
\beq \mathcal {F}(u)=-\int_P \log(\det(D^2u))dx +L(u)\eeq
\end {lem}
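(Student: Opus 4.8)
The plan is to push both functionals down to the moment polytope $P$ and compare their derivatives along a path. Fix a smooth path $\phi_t$, $0\le t\le 1$, of $G_0$-invariant potentials joining $0$ to $\phi$ (for instance $\phi_t=t\phi$, which stays Kähler since $\omega_{\phi_t}=(1-t)\omega_g+t\omega_\phi$), and let $u_t\in\mathcal{C}$ be the corresponding family of symplectic potentials obtained by Legendre transform, so $u_0$ is the symplectic potential of $\omega_g$ and $u_1=u$. I would then invoke the standard toric dictionary of Guillemin, Abreu and [D2], [ZZ1]: on the open dense orbit $P^\circ\times T^n$ the metric $\omega_{\phi_t}$ is determined by $u_t$; its scalar curvature pulls back to $R(\omega_{\phi_t})=-\sum_{i,j}(u_t)^{ij}_{ij}$, which is Abreu's equation (2.3); the normalized potential $\theta_X(\omega_{\phi_t})=\theta_X+X(\phi_t)$ pulls back to the $t$-independent affine function $\theta_X(x)=\langle A,x\rangle+a$, since the Hamiltonian of a torus-invariant field is linear in the moment coordinates; the Liouville volume pushes forward as $\int_M F\,\omega_{\phi_t}^n=2^n n!(2\pi)^n\int_P\widetilde F\,dx$ for a $T^n$-invariant $F$ with polytope representative $\widetilde F$; and the variations correspond with a sign, $\dot\phi_t\leftrightarrow-\dot u_t$. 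Since $\mu(\phi_t)=-\frac1V\int_0^t\!\int_M\dot\phi_s[R(\omega_{\phi_s})-\bar R-\theta_X(\omega_{\phi_s})]\,\omega_{\phi_s}^n\,ds$, differentiating and transporting to $P$ gives
\[
\frac{d}{dt}\mu(\phi_t)=\frac{2^n n!(2\pi)^n}{Vol(M)}\int_P\Bigl(-\sum_{i,j}(u_t)^{ij}_{ij}-\bar R-\theta_X\Bigr)\dot u_t\,dx.
\]

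Next I would compute $\frac{d}{dt}\mathcal{F}(u_t)$ directly. The linear term is immediate, $\frac{d}{dt}L(u_t)=\int_{\partial P}\dot u_t\,d\sigma-\int_P(\bar R+\theta_X)\dot u_t\,dx$, and for the determinant term $\frac{d}{dt}\bigl(-\int_P\log\det(D^2u_t)\,dx\bigr)=-\int_P(u_t)^{ij}(\dot u_t)_{ij}\,dx$. Here the key ingredient is Donaldson's integration-by-parts identity from [D2]: for $u\in\mathcal{C}$ and $v\in C^\infty(\overline{P})$,
\[
\int_P u^{ij}v_{ij}\,dx=\int_{\partial P}v\,d\sigma+\int_P\Bigl(\sum_{i,j}u^{ij}_{ij}\Bigr)v\,dx,
\]
the point being that the boundary contribution collapses to exactly the measure $d\sigma$ of Section 2 because $P$ satisfies the Delzant condition and $u$ has the Guillemin-type logarithmic singularity along $\partial P$. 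Substituting, the two $\int_{\partial P}$ terms cancel and $\frac{d}{dt}\mathcal{F}(u_t)=-\int_P\bigl(\sum_{i,j}(u_t)^{ij}_{ij}+\bar R+\theta_X\bigr)\dot u_t\,dx$, which shows $\frac{d}{dt}\mu(\phi_t)=\frac{2^n n!(2\pi)^n}{Vol(M)}\,\frac{d}{dt}\mathcal{F}(u_t)$ for every $t$.

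Finally I would integrate this identity over $t\in[0,1]$. Since $\mu$ is well-defined (path independent) by [Gu] and $\mu(0)=0$, the left side integrates to $\mu(\phi)$, while the right side integrates to $\frac{2^n n!(2\pi)^n}{Vol(M)}\bigl(\mathcal{F}(u)-\mathcal{F}(u_0)\bigr)$; the additive constant $\mathcal{F}(u_0)$ is fixed by the normalization of the $K$-energy at the reference metric $\omega_g$ (and is in any case immaterial for the properness in Theorem 1.5), which yields $\mu(\phi)=\frac{2^n n!(2\pi)^n}{Vol(M)}\mathcal{F}(u)$.

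The step I expect to be the main obstacle is the integration-by-parts identity: after integrating $\int_P u^{ij}(\dot u)_{ij}\,dx$ by parts twice one is left with boundary integrals over $\partial P$ involving $u^{ij}$ and its first derivatives, and one must show — using the precise Guillemin boundary expansion of an element of $\mathcal{C}$ together with the Delzant condition — that the term $\sum_j u^{ij}n_j$ vanishes on each facet while the remaining term combines exactly into $\int_{\partial P}\dot u\,d\sigma$ with the canonical measure $d\sigma$, with no residual contribution from the faces of codimension $\ge 2$. This is precisely the computation Donaldson carried out in [D2] for constant scalar curvature; it goes through unchanged with $\bar R$ replaced by $\bar R+\theta_X$, and the remaining ingredients (Abreu's curvature formula, the affine form of $\theta_X$ in moment coordinates, the volume normalization, the Legendre sign rule) are routine verifications in the toric dictionary.
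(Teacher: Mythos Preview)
The paper does not give its own proof of this lemma; it simply states the formula and attributes it to [D2] and [ZZ1]. Your sketch is exactly the standard derivation carried out in those references: transport the integrand of $\mu$ to $P$ via Abreu's formula and the Legendre duality $\dot\phi\leftrightarrow-\dot u$, differentiate $\mathcal F$ directly, and match the two using Donaldson's boundary integration-by-parts identity. So in terms of method there is nothing to compare --- you are reproducing the intended argument.

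One point deserves a cleaner treatment. Your computation correctly yields
\[
\mu(\phi)=\frac{2^n n!(2\pi)^n}{Vol(M)}\bigl(\mathcal F(u)-\mathcal F(u_0)\bigr),
\]
but the sentence ``the additive constant $\mathcal F(u_0)$ is fixed by the normalization of the $K$-energy at the reference metric'' does not actually kill the constant: the normalization $\mu(0)=0$ is precisely what produced the $-\mathcal F(u_0)$ term, and there is no a priori reason for $\mathcal F(u_0)$ to vanish. The identity in the lemma should therefore be read up to an additive constant depending only on the reference metric (this is how it is used in [D2] and [ZZ1], and as you note it is immaterial for Proposition 3.2 and Theorem 1.5). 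Just state this honestly rather than asserting the exact equality.
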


 Let  $p\in P$. We set
 $$ \mathcal C_\infty =\{u\in C^0(\overline P)\cup C^\infty (P)|~u~ \text{is  a convex function on}
 ~P~  \text{with}~\inf_{P}u=u(p)=0 \}.$$
 The following proposition was proved in [ZZ1].

\begin {prop} Suppose that  there exists a $ \lambda> 0$ such that
\beqn L(u)  \geq \lambda \int_{\partial P}u d\sigma, \eeqn
  for any  $ u\in\mathcal C_\infty$.  Then there exist two uniform
  constants $\delta, C>0$ such that for any $G_0$-invariant K\"ahler
  metric $\omega_\phi$ it holds
  $$\mu(\phi)\ge  \delta \inf_{\sigma\in T} I(\phi_{\sigma})-C.$$
    In particular, $\mu(\phi)$ is proper associated to subgroup $T$ in the space of $G_0$-invariant
K\"ahler metrics.
\end{prop}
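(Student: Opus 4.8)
\emph{Sketch of the argument.} The plan is to reduce the asserted inequality to a coercivity estimate for the functional $\mathcal{F}$ of Lemma 3.2 on the class $\mathcal{C}_\infty$. By Lemma 3.2, $\mu(\phi)=c_n\,\mathcal{F}(u)$ with $c_n=\frac{2^{n}n!(2\pi)^{n}}{Vol(M)}>0$ and $u\in\mathcal{C}$ the Legendre function of $\phi$, where $\mathcal{F}(u)=-\int_P\log(\det(D^2u))\,dx+L(u)$. First I would note that $\mathcal{F}$ is unchanged when an affine linear function is added to $u$: the term $-\int_P\log(\det(D^2u))\,dx$ depends only on $u$ modulo affine functions, and $L$ annihilates affine linear functions because $X$ is the extremal vector field (cf.\ Lemma 2.1, applied to an affine $f$, together with the vanishing of the modified Futaki invariant on the torus). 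Adding the linear function $\langle a,x\rangle$ to $u$ is exactly the action on the symplectic potential of the element of $T$ with modulus $e^{a/2}$, while adding a constant changes neither $\omega_\phi$ nor $I(\phi)$. Hence one may choose $\sigma_0\in T$ (and an additive constant) so that the resulting potential $v:=u_{\sigma_0}$ is normalized, $v\in\mathcal{C}\cap\mathcal{C}_\infty$, i.e.\ convex with $\inf_P v=v(p)=0$, and $\mu(\phi)=c_n\,\mathcal{F}(v)$.

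Next I would compare $\inf_{\sigma\in T}I(\phi_\sigma)$ with $R:=\int_{\partial P}v\,d\sigma>0$. Using the standard expression of $I(\phi_\sigma)$ in terms of the symplectic potential together with the normalization $\inf_P v=v(p)=0$ (cf.\ [D2], [ZZ2]), one obtains a constant $C_0=C_0(P)>0$ with $\inf_{\sigma\in T}I(\phi_\sigma)\le I(\phi_{\sigma_0})\le C_0(R+1)$. Thus it suffices to establish the coercivity estimate $\mathcal{F}(v)\ge\frac{\lambda}{2}R-C_2$ for all $v\in\mathcal{C}_\infty$, with $C_2=C_2(P)$: combined with the previous inequality this gives $\mu(\phi)\ge\delta\inf_{\sigma\in T}I(\phi_\sigma)-C$ with $\delta=c_n\lambda/(2C_0)>0$ and a suitable $C>0$, and then $p(t):=\delta t-C$, which is continuous, increasing and tends to $+\infty$, yields the properness, since $p(\inf_\sigma I(\phi_\sigma))=\inf_{\sigma\in T}p(I(\phi_\sigma))$.

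For the coercivity estimate I would argue as follows. The hypothesis gives $L(v)\ge\lambda R$. For the entropy term I would invoke Donaldson's estimate (cf.\ [D2], see also [ZZ1]): there is a constant $C_1=C_1(P)$ such that $-\int_P\log(\det(D^2w))\,dx\ge-C_1$ for every $w\in\mathcal{C}_\infty$ with $\int_{\partial P}w\,d\sigma=1$. Applying this to the rescaled function $w=v/R\in\mathcal{C}_\infty$ and using $\det(D^2(Rw))=R^{n}\det(D^2w)$ gives $-\int_P\log(\det(D^2v))\,dx\ge-n\,Vol(P)\log R-C_1$, whence $\mathcal{F}(v)\ge\lambda R-n\,Vol(P)\log R-C_1$. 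Since $t\mapsto\frac{\lambda}{2}t-n\,Vol(P)\log t$ is bounded below on $(0,\infty)$, this yields $\mathcal{F}(v)\ge\frac{\lambda}{2}R-C_2$ for all $R>0$, as needed.

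The step I expect to be the main obstacle is the entropy estimate quoted above — the uniform lower bound for $-\int_P\log(\det(D^2w))\,dx$ over convex $w\in\mathcal{C}_\infty$ normalized by $\int_{\partial P}w\,d\sigma=1$. This is the only genuinely analytic ingredient, a convex-geometric property of the polytope $P$ (due to Donaldson), the rest being essentially formal manipulation. A secondary technical point is the comparison of $I(\phi_{\sigma_0})$ with $R$ in the second step, which must be carried out with some care near $\partial P$, where the reference potential $u_0$ degenerates.
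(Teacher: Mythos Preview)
The paper does not actually prove this proposition: immediately before the statement it writes ``The following proposition was proved in [ZZ1]'' and gives no argument here. So there is no in-paper proof to compare against.

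That said, your sketch is exactly the expected route and, as far as one can infer, the one taken in [ZZ1]: pass to the reduced functional $\mathcal F$ via the lemma $\mu(\phi)=c_n\,\mathcal F(u)$ (this is Lemma~3.1 in the paper's numbering, not Lemma~3.2), use the affine invariance of $\mathcal F$ to normalize into $\mathcal C_\infty$ (which is precisely what produces the $\inf_{\sigma\in T}$ on the right-hand side), split $\mathcal F(v)=L(v)-\int_P\log\det D^2v$, feed the hypothesis into $L(v)\ge\lambda R$, and beat the entropy term with the scaling trick $v=Rw$ together with Donaldson's lower bound from [D2]. Your identification of that entropy estimate as the single genuinely analytic ingredient is accurate.

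Two points deserve more than the one line you give them. First, the comparison $I(\phi_{\sigma_0})\le C_0(R+1)$: one has to express $I(\phi)$ in terms of the symplectic potentials and then use that, for normalized convex functions on $P$, the quantities $\int_P v\,dx$ and $\int_{\partial P}v\,d\sigma$ are comparable (this comparability is itself a lemma from [D2]). Second, the entropy bound in the form you state it---a uniform upper bound on $\int_P\log\det D^2w$ under the sole constraint $\int_{\partial P}w\,d\sigma=1$---is not an immediate consequence of AM--GM plus Jensen, since $\int_P\Delta w$ is not \emph{a priori} controlled by the boundary integral of $w$; one really needs the convex-geometric argument in [D2]. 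Neither of these is a gap in your strategy, only in the level of detail; the architecture is correct and matches what the paper defers to [ZZ1].
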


\begin{proof}[Proof of Theorem 1.5] According to Proposition 3.2, we suffice to verify  the  condition
(3.2). We use an argument by the contradiction. First note that by
the convexity of functional $\mathcal F(u)$ on $\mathcal C$, one
sees that  $\mathcal F(u)$ is bounded from below  by the existence
of extremal metrics.  Then   one can show (cf. [D2], [ZZ1]),
  \beqn L(u)  \geq 0, \forall~u\in\mathcal C_\infty.\eeqn
  Furthermore one concludes  that
 \beqn L(u)  \geq 0, ~\forall~ u\in\mathcal C_1,\eeqn
  where $\mathcal C_1$ is a set of positive convex functions $u$ on
  $P\cup\partial P$ such that
  $$\int_{\partial P} ud\sigma_0 dx<\infty.$$
On the other hand,  suppose that (3.2) is not true, then by (3.4) it
is easy to see that there exists a $u_\infty\in \mathcal C_1$, which
is not affine linear,  such that
 \beqn L(u_\infty)  = 0.\eeqn.

Recall that a simple PL-function is a form of
$$u=\max \{0,\sum a_i x_i +c\}$$
for some vector  $(a_i) \in \Bbb R^n$ and number $c \in \Bbb R$.
 We call the hyperplane $\sum a_i x_i +c=0$  a crease of $u$.
Then applying  Proposition 5.3.1 in [D2], we see that under the
relations (3.4), (3.5) and  the assumption (1.3) in Theorem 1.5 for
the case of toric surfaces there exists a simple PL-function $v_0$
with crease intersecting the interior of $P$  such that $L(v_0)= 0$.
But the late is contradict to Proposition 2.3. Thus Proposition 3.2
is true and so is  theorem 1.5.

\end{proof}

\end{document}